\newtheorem{thm}{Theorem}[section]
\newtheorem{lem}[thm]{Lemma}
\newtheorem{rem}{Remark}[section]
\numberwithin{equation}{section}
\newcommand\be{\begin{equation}}
\newcommand\ee{\end{equation}}
\newcommand\R{\mathbb R}
\def\eps{\varepsilon}
\title[Liouville-type theorem]
{A Liouville-type theorem for an elliptic equation with superquadratic growth in the gradient}
\author[Roberta Filippucci]{Roberta Filippucci \textsuperscript{1}}
\author[Patrizia Pucci]{Patrizia Pucci \textsuperscript{1}}
\author[Philippe Souplet]{Philippe Souplet \textsuperscript{2}}
\thanks{\textsuperscript{1}
Dipartimento di Matematica e Informatica, Universit\'a degli Studi di Perugia
Via Vanvitelli 1, I-06123 Perugia, Italy.
E-mails: roberta.filippucci@unipg.it; patrizia.pucci@unipg.it}
\thanks{\textsuperscript{2}
Universit\'e Paris 13, Sorbonne Paris Cit\'e,
CNRS UMR 7539, Laboratoire Analyse, G\'{e}om\'{e}trie et Applications,
93430 Villetaneuse, France. 
E-mail: souplet@math.univ-paris13.fr}
\begin{document}

\begin{abstract}
We consider the elliptic equation $-\Delta u = u^q|\nabla u|^p$ in $\mathbb R^n$ for any $p\ge 2$ and $q>0$.
 We prove a Liouville-type theorem, which asserts that any positive bounded solution is constant.
 The proof technique is based on monotonicity properties for the spherical averages  of sub- and super-harmonic functions,
combined with a gradient bound obtained by a local Bernstein argument.
 This solves, in the case of bounded solutions, 
 a  problem left open in~\cite{BVGHV}, where the authors consider the case $0<p<2$.
Some extensions to elliptic systems are also given.
\end{abstract}

\maketitle

\section{Introduction and main results}

In this paper we are interested in proving a Liouville-type result for positive solutions of the elliptic equation
\be\label{main_pb}
-\Delta u = u^q|\nabla u|^p \quad\text{ in } \R^n,
\ee
where $p\ge2$, $q>0$ and $n\ge 1$. 
Equation~\eqref{main_pb}  when $p=0$  reduces to the celebrated Lane-Emden equation and in this case the 
well-known and deep 
result of %
Gidas and Spruck in~\cite{GS81} asserts that if 
 $q<q_S$,
where $q_S=(n+2)/(n-2)_+$ is the critical Sobolev exponent,
 then no positive solutions can exist.   The result is sharp  since it fails for $q\ge q_S$. 
 In particular, for $q=q_S$ with $n\ge 3$,
 equation~\eqref{main_pb} admits the positive bounded Lane-Emden solutions 
  $$u(x)=\biggl(\frac{c\alpha}{\alpha^2+|x|^2}\biggr)^{(n-2)/2},\quad \alpha>0,\quad c=\sqrt{n(n-2)}.$$
 If we now consider supersolutions of the Lane-Emden equation, 
 the Liouville result remains true in the smaller range $q\le q_*$,
 where $q_*=n/(n-2)_+$ is the so-called Serrin critical exponent (cf.~e.g.~\cite{szActa}), and this condition is optimal for supersolutions; see \cite{ASCPDE} for a detailed description of some
 results in this direction.
 Of course, due to the large number of papers dealing with this topic and its generalizations to
 problems involving quasilinear elliptic operators, it is not possible to produce here an exhaustive bibliography.

 The subcase of~\eqref{main_pb}  when $q=0$ reduces to the well-known
 diffusive Hamilton-Jacobi equation.  It was studied in~\cite{Lions85} and it was proved there that any classical solution
  has to be constant if $p>1$. Thus, in that case, nonexistence holds without  sign condition.

 Our main result is the following.

\begin{thm}
\label{mainthm}
Let $u$ be a positive  bounded classical solution of~\eqref{main_pb},
with $p\ge 2$  and $q>0$. Then $u$ is constant.
\end{thm}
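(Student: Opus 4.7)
The plan combines a Bernstein-type gradient estimate with the classical monotonicity of spherical averages of sub- and super-harmonic functions and Jensen's inequality applied to a suitable convex power of $u$. Since $-\Delta u = u^q|\nabla u|^p\geq 0$, the function $u$ is itself superharmonic. Applying Bochner's identity to $w=|\nabla u|^2$ and using the equation to replace $\Delta u$, the cutoff Bernstein method, which closes up precisely because $p\geq 2$, yields the pointwise estimate
\begin{equation*}
u^{q+1}(x)\,|\nabla u(x)|^{p-2}\leq K\qquad\text{for every }x\in\R^n,
\end{equation*}
with $K=K(n,p,q,\|u\|_\infty)$; for $p=2$ this is trivial with $K=\|u\|_\infty^{q+1}$, while for $p>2$ the standard cutoff computation, after letting the ball radius tend to infinity, gives $K=qn$.

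Set $\alpha=K+1$ and $v=u^\alpha/\alpha$. A direct computation yields
\begin{equation*}
\Delta v=u^{\alpha-2}|\nabla u|^2\bigl(\alpha-1-u^{q+1}|\nabla u|^{p-2}\bigr)\geq 0,
\end{equation*}
so $v$ is subharmonic and bounded. Let $M(r)$ and $V(r)$ denote the spherical averages of $u$ and $v$ over $\partial B_r(0)$. The divergence theorem gives $M'(r)\leq 0$ and $V'(r)\geq 0$, so the limits $L_-:=\lim_{r\to\infty}M(r)\geq 0$ and $L_+:=\lim_{r\to\infty}V(r)$ both exist, and Jensen's inequality (applied to the convex function $s\mapsto s^\alpha/\alpha$) gives $V(r)\geq M(r)^\alpha/\alpha$ for every $r\geq 0$.

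In dimension $n\geq 3$, the Riesz decomposition of the bounded superharmonic $u$, together with Liouville's theorem for bounded harmonic functions, shows that $u(x)\to L_-$ pointwise as $|x|\to\infty$; averaging over spheres then gives $L_+=L_-^\alpha/\alpha$. The chain of inequalities
\begin{equation*}
\tfrac{L_-^\alpha}{\alpha}=L_+\geq V(r)\geq\tfrac{M(r)^\alpha}{\alpha}\geq\tfrac{L_-^\alpha}{\alpha}
\end{equation*}
collapses to equalities at every $r\geq 0$, forcing $M(r)\equiv L_-$. Hence $\int_{B_r}\Delta u=0$ for every $r>0$, and combined with $\Delta u\leq 0$ this gives $\Delta u\equiv 0$; a bounded harmonic function on $\R^n$ is constant. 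The dimensions $n=1$ (where $u$ is bounded and concave, hence constant) and $n=2$ (where any bounded superharmonic function on $\R^2$ is already constant) are immediate.

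The crux of the argument is the Bernstein estimate: for $p<2$ the factor $|\nabla u|^{p-2}$ becomes singular at critical points of $u$ and the cutoff computation degenerates, which is precisely why the bounded case was left open for $0<p<2$ in \cite{BVGHV}. A secondary technical ingredient is the identification $L_+=L_-^\alpha/\alpha$, which relies on the decay at infinity of the Riesz potential of $-\Delta u$ in $\R^n$ for $n\geq 3$.
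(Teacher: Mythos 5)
Your overall architecture is the same as the paper's: a Bernstein-type cutoff argument produces the global bound $u^{q+1}|\nabla u|^{p-2}\le K$, and then a large power of $u$ is shown to be subharmonic, and the tension between the nonincreasing spherical averages of the superharmonic $u$ and the nondecreasing spherical averages of the subharmonic power is exploited. The computation $\Delta(u^\alpha/\alpha)=u^{\alpha-2}|\nabla u|^2\bigl(\alpha-1-u^{q+1}|\nabla u|^{p-2}\bigr)$ and the choice $\alpha=K+1$ mirror the paper's choice of $s\ge 1+K$ for $z=(u-\ell)^s$. So the skeleton is sound.

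However, there is a genuine gap in the step where you invoke the Riesz decomposition to claim that a bounded positive superharmonic function on $\R^n$ (for $n\ge3$) satisfies $u(x)\to L_-$ pointwise as $|x|\to\infty$. This is false in general: writing $u=L_-+G\mu$ with the Newtonian potential of the Riesz measure $\mu\ge0$, one can distribute $\mu$ as a sum of unit-mass bumps on widely spaced balls $B(a_k,1)$, $|a_k|\to\infty$; the resulting potential $G\mu$ is bounded but has $\limsup_{|x|\to\infty}G\mu(x)>0$ along the sequence $a_k$, so no pointwise limit exists. The spherical average still tends to $L_-$, but that is a much weaker (and in fact the available) conclusion. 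As written, your derivation of $L_+=L_-^\alpha/\alpha$ therefore does not go through.

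The good news is that the step is patchable without any potential theory. Since $L_-\le u\le\|u\|_\infty$, the mean value theorem gives
\begin{equation*}
0\le u^\alpha-L_-^\alpha\le\alpha\|u\|_\infty^{\alpha-1}\,(u-L_-),
\end{equation*}
and averaging over $S_r$ and letting $r\to\infty$ yields $\alpha L_+-L_-^\alpha\le\alpha\|u\|_\infty^{\alpha-1}(L_--L_-)=0$, hence $L_+\le L_-^\alpha/\alpha$; combined with your Jensen bound $L_+\ge L_-^\alpha/\alpha$ this closes the squeeze. (In fact the Jensen step then becomes superfluous.) This is essentially what the paper does, but it sidesteps the issue from the start by working with $z=(u-\ell)^s$ instead of $u^\alpha$: then $\bar z(R)\le\|u\|_\infty^{s-1}(\bar u(R)-\ell)\to0$ directly, and the nondecreasing bounded function $\bar z$ with limit $0$ must be identically $0$, forcing $u\equiv\ell$ with no need for a separate harmonicity argument or a case split over $n$. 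One further small inaccuracy: the asserted value $K=qn$ in the Bernstein estimate for $p>2$ is unsubstantiated (the constant produced by the cutoff computation depends on $\|u\|_\infty$ as well as $n,p,q$), though this does not affect the logic since only finiteness of $K$ is used.
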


\begin{rem}{\rm
(a) The result of Theorem~\ref{mainthm} is delicate since the conclusion fails for supersolutions.
Namely, for $p,q\ge 0$, there exists a positive, nonconstant bounded classical solution of
\be\label{main_pb2}
-\Delta u \ge u^q|\nabla u|^p \quad\text{ in } \R^n
\ee
 whenever $n\ge 3$ and
\begin{equation}
\label{cond_ex}
(n-2)q+(n-1)p>n.
\end{equation} 
Such supersolution can be found under the form $c(1+|x|^2)^{-\beta}$
for suitable $\beta,c>0$.
 Condition~\eqref{cond_ex} is essentially optimal, at least in the superlinear range. 
Indeed, if
$$(n-2)q+(n-1)p\le n \quad\hbox{and}  \quad p>1,$$
then any positive solution of \eqref{main_pb2} must be constant;
see \cite[Theorem~7.1]{cm} and \cite[Theorem~15.1]{mp}
 (and this remains of course true when $n\le 2$ %
since any positive superharmonic function is then constant).
We also refer to \cite{mp}, \cite{Fil09} for extensions of this result to quasilinear problems.

\smallskip

(b)  Although the  case of equation~\eqref{main_pb} with negative $q$ does not seem to have been 
much studied in the literature,
it is worth pointing out that Theorem~\ref{mainthm} remains true for all  $q>1-p$ (with $p\ge 2$),
as can be checked by inspection of the proof (see also Remark~\ref{rempq}). 

\smallskip

(c) We point out that, in the case $q=0$, Theorem~\ref{mainthm}, treated in \cite{Lions85} and already discussed above,   holds without assuming that $u$ is bounded.

Equation \eqref{main_pb} with $q\in (0,2)$, $p+q>1$, was studied in
detail in~\cite{BVGHV}, cf.~in particular %
Corollary B-1,
and various regions for nonexistence were determined.
 We also refer to the earlier paper~\cite{BPGMQ} where the case $q\in (0,N/(N-1))$ was considered,
 cf.~\cite[Corollary~2]{BPGMQ}.
The case $q\ge 2$ was left essentially open, and we completely answer it here  
 in the case of bounded solutions.
Note that Theorem~\ref{mainthm} is true without boundedness assumption in the radial case,
as shown in~\cite{BVGHV}.
We do not know if the conclusion of~Theorem~\ref{mainthm} remains true without
the boundedness assumption on $u$ in the nonradial case for $n\ge 3$.

Let us briefly consider the coercive analogue of \eqref{main_pb},
namely $\Delta u=u^q|\nabla u|^p$.
This problem  is quite different since, in the case $p=0$ (the so-called Keller-Osserman problem) the threshold value of $q$ for nonexistence does not depend on the dimension. 
  Early results for $p>0$ appeared in~\cite{MarPor}, while more recent ones can be found in~\cite{fprgrad}, \cite{farserrII} and the references therein. 
 See also~\cite{Ghergu} for results on radial solutions of related coercive systems.
}
\end{rem}

 The basic idea of our proof is to use the monotone nonincreasing property of the spherical averages of 
the superharmonic function $u$ (cf.~Lemma \ref{lemharm}).
A key observation is then that a suitably large power of the function $v:=u-\inf u$ is 
on the contrary {\it subharmonic}.
The latter property is a consequence of a suitable gradient bound (cf.~Lemma \ref{lembound}),
obtained by a local Bernstein argument.
The combination of these two opposite monotonicity properties eventually forces $u$ to be constant.

We stress that this proof is quite different from those in~\cite{Lions85} and~\cite{BVGHV}. Indeed, the celebrated result
by Lions for $q=0$, cf.~Corollary~IV of~\cite{Lions85}, is a  direct consequence  of a local Bernstein-type estimate
 of the gradient of generic $C^2$ solutions of $-\Delta u=|\nabla u|^p$ in a ball $B_R$, 
 an estimate which forces the vanishing of the gradient upon letting $R\to\infty$.
On the other hand, the Liouville Theorem~B in~\cite{BVGHV} in the range $0<p<2$ 
(plus additional assumptions on $p,q$)
 is also proved by Bernstein-type arguments (significantly more delicate and technical than in the case $q=0$.)

\medskip

Our methods can be used for more general problems, including systems.
As one of the possible extensions, let us consider the following class of systems:
\be\label{extsyst}
\begin{cases}
-\Delta u=f(x,u,v,\nabla u,\nabla v)&\mbox{ in } \R^n,
\vspace{1mm}\\
-\Delta v=g(x,u,v,\nabla u,\nabla v) &\mbox{ in } \R^n,
\end{cases}
\ee
where $f, g: \R^n\times[0,\infty)^2\times\R^n\times\R^n\to \R$.
We have the following result.

\begin{thm}
\label{mainthm2}
Let $f,g$ be nonnegative.
\smallskip

(i) Assume that, for each $M>0$, there exists a constant $K=K(M)>0$ such that,
\be\label{hypmainthm2}
\begin{aligned}
&\hbox{for all $(u,v,\xi,\zeta)\in \Gamma_M:= [0,M]^2\times\R^n\times\R^n$,} \\
&\quad uf(x,u,v,\xi,\zeta)\le K(M)|\xi|^2 \quad\hbox{ and }\quad vg(x,u,v,\xi,\zeta)\le C(M)|\zeta|^2.
\end{aligned}
\ee
Then any nonnegative, bounded, classical solution $(u,v)$ of~\eqref{extsyst} 
must be constant.
\smallskip

(ii) Assume that property \eqref{hypmainthm2} holds with
 $\Gamma_M$ replaced by $[0,M]^2\times B_M\times B_M$.
Let $(u,v)$ be a nonnegative classical solution of~\eqref{extsyst} 
such that $u,v,|\nabla u|,|\nabla v|$ are bounded.
Then $(u,v)$ must be constant.
\end{thm}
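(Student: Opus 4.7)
The plan is to apply the scalar strategy of Theorem~\ref{mainthm} to each equation of the system separately. The point is that the structural bounds in \eqref{hypmainthm2} are posed in terms of $|\xi|=|\nabla u|$ and $|\zeta|=|\nabla v|$ individually, which decouples the two equations at the decisive step and, moreover, already plays the role of the scalar Bernstein-type gradient estimate (cf.\ Lemma~\ref{lembound}): no such estimate has to be proved in part~(i), while in part~(ii) the assumed boundedness of $|\nabla u|$ and $|\nabla v|$ is exactly what allows one to invoke \eqref{hypmainthm2} with the restricted range $[0,M]^2\times B_M\times B_M$.

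Choose $M$ large enough that $0\le u,v\le M$ on $\R^n$ (and, in case~(ii), also $|\nabla u|,|\nabla v|\le M$), and set $K:=K(M)$; the same argument will apply to $v$ with the constant $C(M)$ in place of $K$. Because $f,g\ge 0$, both $u$ and $v$ are nonnegative, bounded, and superharmonic on $\R^n$. For fixed $x_0\in\R^n$, denote by $\bar u(r;x_0)$ the spherical mean of $u$ over $\partial B_r(x_0)$. By Lemma~\ref{lemharm} the map $r\mapsto\bar u(r;x_0)$ is nonincreasing, and a routine comparison of averages around different centers, together with the boundedness of $u$, shows that its limit at infinity is independent of $x_0$; being at the same time bounded below by $m_u:=\inf_{\R^n}u$ and bounded above by $u(x_0)$ for every $x_0$, this limit must be $m_u$.

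If $u$ attains $m_u$ somewhere, the strong minimum principle applied to the superharmonic $u$ yields $u\equiv m_u$. Otherwise set $w:=u-m_u>0$ and choose $\alpha:=K+1\ge 1$. A direct computation using $-\Delta w=f$ gives
\begin{equation*}
\Delta(w^\alpha)=-\alpha w^{\alpha-1}f+\alpha(\alpha-1)w^{\alpha-2}|\nabla w|^2.
\end{equation*}
Since $0<w\le u\le M$, hypothesis \eqref{hypmainthm2} implies $wf\le uf\le K|\nabla u|^2=K|\nabla w|^2$, and with $\alpha-1=K$ this forces $\Delta(w^\alpha)\ge 0$ on $\R^n$. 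Hence $w^\alpha$ is subharmonic, so $w(x_0)^\alpha\le\overline{w^\alpha}(r;x_0)$ for every $x_0$ and every $r>0$. Using the elementary bound $w^\alpha\le M^{\alpha-1}w$ one obtains
\begin{equation*}
w(x_0)^\alpha\le\overline{w^\alpha}(r;x_0)\le M^{\alpha-1}\bar w(r;x_0)\longrightarrow 0\quad\text{as }r\to\infty,
\end{equation*}
whence $w\equiv 0$ and $u\equiv m_u$. The identical argument applied to $v$ shows $v$ is constant, proving both~(i) and~(ii).

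The only slightly delicate ingredient is the convergence $\bar u(r;x_0)\to m_u$ for a bounded superharmonic function on $\R^n$, which relies on the $x_0$-independence of the monotone limit of the spherical averages; this is standard but deserves to be recorded explicitly, since it is precisely what closes the argument once the subharmonicity of $w^\alpha$ has been established.
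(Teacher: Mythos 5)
Your proof is correct and follows essentially the same strategy as the paper: observe that $u$ is superharmonic with spherical averages decreasing to $\ell=\inf u$, show that $(u-\ell)^s$ is subharmonic for $s$ large using hypothesis~\eqref{hypmainthm2}, and then combine the two opposite monotonicities of the spherical averages to force $u\equiv\ell$. The only (harmless) variations are that you handle the potential non-smoothness of $(u-\ell)^\alpha$ at zeros of $u-\ell$ by a case split via the strong minimum principle rather than by additionally taking $s\ge 2$ as the paper does, and you invoke the sub-mean-value inequality directly rather than applying Lemma~\ref{lemharm} to the auxiliary superharmonic function $\|z\|_\infty-z$.
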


\begin{rem}{\rm
(a) Theorem~\ref{mainthm2}(i) applies in particular for the model system
\be\label{extsyst2} 
\begin{cases}
-\Delta u=u^{q_1}v^{r_1}|\nabla u|^{p_1}&\mbox{ in } \R^n,
\vspace{1mm}\\
-\Delta v=v^{q_2}u^{r_2}|\nabla v|^{p_2}&\mbox{ in } \R^n,
\end{cases}
\ee
with $p_1=p_2=2$ and any $q_1,q_2,r_1,r_2\ge 0$, whereas Theorem~\ref{mainthm2}(ii) applies for any $p_1,p_2\ge 2$
and $q_1,q_2,r_1,r_2\ge 0$.
 We refer to e.g. \cite{Fil11}, \cite{Fil13} for related results on systems of inequalities 
 corresponding to \eqref{extsyst2}, 
under stronger restrictions on the exponents.

\smallskip

(b) We note that, in the scalar case (cf.~Theorem~\ref{mainthm}), no boundedness assumption on the gradients was necessary.
This is due to the possibility of proving a suitable gradient estimate by a Bernstein-type argument (cf.~Lemma \ref{lembound}),
a property which does not seem available in general for systems.
\smallskip

(c) As can be seen from the proof, Theorem~\ref{mainthm2} extends in a straightforward manner to systems of more than 
two equations.
}
\end{rem}

\section{Proofs}

In view of the proof of Theorem~\ref{mainthm} we prepare two lemmas.
Our first lemma is a gradient bound, that will be obtained by a local Bernstein argument.
\begin{lem}
\label{lembound}
Let $u$ be a positive  bounded classical solution of~\eqref{main_pb},
with $p> 2$ and $q>0$. Then $u^{q+1}|\nabla u|^{p-2}$ is bounded.
\end{lem}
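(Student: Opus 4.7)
The plan is a local Bernstein argument applied to the weighted auxiliary function $W:=u^a|\nabla u|^2$, where $a:=2q/(p-1)$ is chosen to cancel a specific algebraic term below. Writing $w:=|\nabla u|^2$ and combining Bochner's identity $\tfrac12\Delta w=|D^2u|^2+\nabla u\cdot\nabla(\Delta u)$ with the equation $\Delta u=-u^qw^{p/2}$, the chain rule, and the pointwise inequality $|D^2u|^2\ge(\Delta u)^2/n$, a direct computation gives a pointwise expression of the form
$$\Delta W\;\ge\;\frac{2u^{a+2q}w^p}{n}\;+\;[a(p-1)-2q]\,u^{a+q-1}w^{(p+2)/2}\;-\;a(a+1)u^{a-2}w^2\;+\;\mathcal{T},$$
where $\mathcal{T}$ is a first-order term proportional to $\nabla u\cdot\nabla w$. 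The choice $a=2q/(p-1)$ kills the middle term. At any critical point of $W$, the relation $\nabla W=0$ gives $\nabla w=-(aw/u)\nabla u$ and hence $\nabla u\cdot\nabla w=-aw^2/u$, which reduces $\mathcal{T}$ to further combinations of the two remaining terms; after collecting and using the identity $u^{a+2q}w^p=u^{a-2}w^2\bigl(u^{q+1}w^{(p-2)/2}\bigr)^2$, one obtains
$$\Delta W\;\ge\;u^{a-2}w^2\!\left[\frac{2}{n}\bigl(u^{q+1}|\nabla u|^{p-2}\bigr)^2-a(a+1)\right]\quad\text{at critical points of }W.$$
Consequently, if $W$ admitted an interior maximum on $\R^n$, the bound $\Delta W\le 0$ there would immediately yield $u^{q+1}|\nabla u|^{p-2}\le\sqrt{na(a+1)/2}$.

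Since $W$ need not attain a global maximum, I would localize as follows. Fix $x_0\in\R^n$ and $R>0$, take a smooth cutoff $\eta\in C_c^2(B_{2R}(x_0))$ with $\eta\equiv 1$ on $B_R(x_0)$ and with $|\nabla\eta|^2/\eta$ and $|\Delta\eta|$ suitably controlled by powers of $1/R$ (obtained, e.g., by raising a standard cutoff to a sufficiently large power $k$), and examine the maximum $x^\star$ of $\widetilde W:=\eta^{2k}W$ on $\overline{B_{2R}(x_0)}$. The critical point identity now reads $\nabla w=-(aw/u)\nabla u-(2kw/\eta)\nabla\eta$, which produces an extra contribution to $\mathcal{T}$ bounded by $|\nabla\eta|/\eta$ times a power of $w$; likewise, $\Delta\widetilde W(x^\star)\le 0$ yields an upper bound for $\eta^{2k}\Delta W$ in terms of $W\eta^{2k-1}|\Delta\eta|$ and $W\eta^{2k-2}|\nabla\eta|^2$. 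Substituting back and invoking Young's inequality to absorb each cutoff-induced error into a fractional part of the coercive term $2u^{a+2q}w^p/n$, one obtains an estimate
$$\widetilde W(x^\star)\;\le\;C(n,p,q)\;+\;C(\|u\|_\infty,n,p,q,k)\,R^{-\gamma}$$
for some $\gamma>0$. Since $\widetilde W(x_0)\le\widetilde W(x^\star)$ and $\eta(x_0)=1$, letting $R\to\infty$ with $x_0$ arbitrary produces the global bound on $u^a|\nabla u|^2$, and equivalently on $u^{q+1}|\nabla u|^{p-2}$.

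The main technical difficulty lies in the exponent bookkeeping at the cutoff stage: the exponent $k$ must be chosen large enough (depending on $p$ and $q$) and Young's inequality applied carefully so that each of the several cutoff-induced terms can be absorbed as a fractional part of the coercive contribution $2u^{a+2q}w^p/n$. The strict hypothesis $p>2$ is essential because the quantity actually produced by the Bernstein analysis is a bound on $u^{2q+2}|\nabla u|^{2(p-2)}$, which transfers to a bound on $u^{q+1}|\nabla u|^{p-2}$ precisely when $p-2>0$; at $p=2$ the statement is trivial, as the target quantity then reduces to $u^{q+1}\le\|u\|_\infty^{q+1}$.
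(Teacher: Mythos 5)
Your approach is in the same Bernstein spirit as the paper (which substitutes $u=v^m$ and runs the argument on $w=|\nabla v|^2$; in your notation this amounts to an auxiliary function of the form $u^a|\nabla u|^2$), but there is a genuine gap in the choice of the weight exponent $a$, and it is fatal to the argument as written.

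You choose $a=2q/(p-1)$ precisely so that the term $[a(p-1)-2q]\,u^{a+q-1}w^{(p+2)/2}$ vanishes, and the critical‑point inequality then yields $V(x^\star)\le \sqrt{na(a+1)/2}$ at the maximum point $x^\star$ of $W:=u^a|\nabla u|^2$, where $V:=u^{q+1}|\nabla u|^{p-2}$. But $W$ is not a power of $V$: writing $w=|\nabla u|^2$ one has $W= V^{2/(p-2)}\,u^{\beta}$ with $\beta = a-\tfrac{2(q+1)}{p-2}=-\tfrac{2(q+p-1)}{(p-1)(p-2)}<0$. Thus a bound on $V(x^\star)$ gives $W(x^\star)\le C\,u(x^\star)^{\beta}$ with $\beta<0$; since no positive lower bound on $u$ is available, this does \emph{not} bound $W(x^\star)$, and the step ``$\widetilde W(x_0)\le \widetilde W(x^\star)\le C$'' is a non sequitur. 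The global bound on $u^a|\nabla u|^2$ (and hence on $V$) simply does not follow from the estimate you derive at $x^\star$.

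The fix is to take the \emph{other} natural exponent, $a=\tfrac{2(q+1)}{p-2}$, which forces $W\equiv V^{2/(p-2)}$; then the critical‑point bound on $V(x^\star)$ is literally a bound on $W(x^\star)$, and maximality of $\widetilde W$ at $x^\star$ propagates it to $x_0$. With this choice the coefficient $a(p-1)-2q=\tfrac{2(q+p-1)}{p-2}$ is strictly positive rather than zero, so the corresponding term does not cancel: it becomes a helpful coercive term, and in fact the paper's proof uses precisely this term (via \eqref{aw2}) as the coercive quantity, with no need of the refinement $|D^2u|^2\ge(\Delta u)^2/n$. This is exactly what the substitution $u=v^m$ with $m=\tfrac{p-2}{q+p-1}$ in the paper achieves, since $|\nabla v|^2 = m^{-2}u^{2(q+1)/(p-2)}|\nabla u|^2$. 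As a side remark, your closing explanation of why $p>2$ is needed is off: a bound on $V^2$ trivially bounds $V$ for any $p$; the real reason is that the correct exponent $a=2(q+1)/(p-2)$ (equivalently $m=(p-2)/(q+p-1)$) degenerates at $p=2$.
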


Next, for any function $w$, we denote by
$$\bar w(R)=|S_R|^{-1}\int_{S_R}w \, d\sigma,\quad R>0,$$
the spherical average of $w$.
We shall use the  next result (see e.g.~\cite[Lemma 3.2]{QS}).

\begin{lem}\label{lemharm}
If $w\ge 0$ is superharmonic, then $\bar w$ is nonincreasing in $\mathbb R^+$ and
$$w(x)\ge \lim_{R\to\infty} \bar w(R)\in [0,\infty),\quad\hbox{ for all $x\in\R^n$.}$$
\end{lem}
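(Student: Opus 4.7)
The plan is to derive both assertions from the standard sphere sub-mean-value property of superharmonic functions centered at an arbitrary point, combined with a simple translation argument at the end.

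First, writing $\bar w(R)=|S^{n-1}|^{-1}\int_{S^{n-1}} w(R\omega)\,d\sigma(\omega)$, differentiation together with the divergence theorem yields
$$\bar w'(R)=\frac{1}{|S_R|}\int_{S_R}\partial_\nu w\,d\sigma=\frac{1}{|S_R|}\int_{B_R}\Delta w\,dy\le 0,$$
since $w$ is superharmonic. This gives the monotonicity of $\bar w$ on $(0,\infty)$, and combined with $w\ge 0$ produces a finite limit $L:=\lim_{R\to\infty}\bar w(R)\in[0,\infty)$.

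For the pointwise bound $w(x)\ge L$ at an arbitrary $x\in\R^n$, the very same computation applied to spheres centered at $x$ shows that $\bar w_x(R):=|S_R|^{-1}\int_{S_R(x)}w\,d\sigma$ is nonincreasing in $R$ and tends to $w(x)$ as $R\to 0^+$. Integrating this in $R$ (writing the ball integral in polar form and using $\bar w_x(r)\le w(x)$) yields the volume form of the sub-mean-value inequality, namely $w(x)\ge|B_R|^{-1}\int_{B_R(x)}w\,dy$ for every $R>0$. For $R>|x|$ one has $B_R(x)\supset B_{R-|x|}(0)$, so by nonnegativity of $w$,
$$w(x)\ge \frac{|B_{R-|x|}|}{|B_R|}\cdot\frac{1}{|B_{R-|x|}|}\int_{B_{R-|x|}(0)}w\,dy.$$
Polar coordinates rewrite the last ball average as $\frac{n}{r^n}\int_0^r\rho^{n-1}\bar w(\rho)\,d\rho$ with $r=R-|x|$, and monotonicity of $\bar w$ bounds this below by $\bar w(r)\ge L$. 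Since $|B_{R-|x|}|/|B_R|=((R-|x|)/R)^n\to 1$, letting $R\to\infty$ yields $w(x)\ge L$.

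The argument is classical; the only mild subtlety is that the averages in the statement are centered at $0$ while the pointwise bound must hold at arbitrary $x$. Passing through the volume form of the mean value inequality at $x$ (rather than trying to compare spherical averages centered at different points) handles this cleanly via elementary inclusions of balls.
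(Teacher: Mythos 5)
The paper does not prove Lemma~\ref{lemharm}; it simply cites \cite[Lemma~3.2]{QS}. Your argument, therefore, cannot coincide with ``the paper's proof,'' but it is a correct and self-contained derivation of exactly the statement the paper uses, and it is a clean choice.

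Each step checks out: the differentiation under the integral plus the divergence theorem gives $\bar w'(R)=|S_R|^{-1}\int_{B_R}\Delta w\le 0$; nonnegativity of $\bar w$ then produces the finite limit $L$; the same computation centered at $x$, integrated in polar coordinates, gives the volume supermean inequality $w(x)\ge |B_R|^{-1}\int_{B_R(x)}w$; the inclusion $B_{R-|x|}(0)\subset B_R(x)$ together with $w\ge 0$ transfers this to a ball centered at the origin; rewriting that ball average in terms of $\bar w(\rho)$, using monotonicity of $\bar w$, and sending $R\to\infty$ yields $w(x)\ge L$. The decomposition of the ball-average constant $|B_{R-|x|}|/|B_R|\to 1$ is exactly the right final ingredient. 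The observation you flag at the end — that the nontrivial point is the mismatch of centers, resolved by passing to the volume form of the supermean inequality — is indeed the only place where one has to do something beyond a one-line monotonicity statement.

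One small remark worth making explicit: your derivation uses $\Delta w$ and hence implicitly assumes $w\in C^2$ (or at least smooth enough to invoke the divergence theorem). That is adequate here, since in the paper $w$ is either the $C^2$ solution $u$ itself or $\|z\|_\infty-z$ with $z=(u-\ell)^s$, $s\ge 2$, which is also $C^2$. For a general superharmonic $w$ (merely lower semicontinuous), one would replace the differentiation-plus-divergence step with the defining supermean inequality on spheres, and the rest of your argument carries over verbatim.
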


\begin{proof}[Proof of Lemma \ref{lembound}]
Set
$$u=v^m$$
 for $m>0$ to be chosen later. We compute
$$\nabla u=mv^{m-1}\nabla v,\qquad \Delta u=mv^{m-1}\Delta v+m(m-1)v^{m-2}|\nabla v|^2.$$
Substituting in \eqref{main_pb}, we obtain
$$-mv^{m-1}\Delta v-m(m-1)v^{m-2}|\nabla v|^2 = m^{p}v^{mq+(m-1){p}}|\nabla v|^{p}.$$
Hence, dividing by $mv^{m-1}$, we get
\begin{equation}\label{eq_v}
-\Delta v=(m-1)\frac{|\nabla v|^2} {v}+m^{p-1}v^{mq+(m-1)(p-1)}|\nabla v|^p.
\end{equation}
For given $\alpha\in (0,1)$ any $x_0\in \mathbb R^n$, let $\eta\in C^2(\mathbb R^n)$ be a cut-off function such that $0\le \eta\le 1$ in $\mathbb R^n$, $\eta=1$ in
$B_1(x_0)$ and $\eta=0$ in $B_2^C(x_0)$ and such that
\begin{equation}\label{cutoff_properties}
|\nabla \eta|\le C\eta^\alpha, \quad |\Delta \eta|+\frac{|\nabla \eta|^2}{\eta}\le C\eta^\alpha\quad\mbox{in}\ B_{2},
\end{equation}
with $C=C(\alpha,n)>0$ independent of $x_0$.
Put
\begin{equation}\label{def_z}
w=|\nabla v|^2,\qquad z:= \eta w, 
\end{equation}
so that
$$\Delta z= \mbox{div}\bigl(|\nabla v|^2 \nabla \eta+\eta \nabla (|\nabla v|^2)\bigr)
= |\nabla v|^2\Delta \eta +2 \langle \nabla (|\nabla v|^2), \nabla \eta\rangle +\eta \Delta (|\nabla v|^2).$$
 Recalling the Bochner formula $\Delta w=2\langle \nabla (\Delta v), \nabla v\rangle +2 |D^2v|^2$,
where $|D^2v|^2=\sum_{ij}(v_{ij})^2$, and using \eqref{eq_v}, we compute
$$\begin{aligned}
\langle \nabla (\Delta v), \nabla v\rangle&=\left\langle \nabla \biggl(-\frac {m-1}vw -m^{p-1}v^{mq+(m-1)(p-1)}w^{p/2}\biggr),\nabla v\right\rangle\\
&=\biggl[-\frac {m-1}v-\frac p2m^{p-1}v^{mq+(m-1)(p-1)}w^{(p-2)/2}\biggr]\langle \nabla w, \nabla v\rangle \\
&\quad-m^{p-1}[mq+(m-1)(p-1)]v^{m(q+p-1)-p}w^{(p+2)/2}+\frac {m-1}{v^2}w^2.
\end{aligned}$$
Therefore, $w$ is a solution of
\begin{gather*}-\Delta w -\langle b, \nabla w\rangle +2 |D^2v|^2=2aw^2,\\
a=m^{p-1}[mq+(m-1)(p-1)]v^{m(q+p-1)-p}w^{(p-2)/2}-\frac {m-1}{v^2},\\
b= \biggl[2\frac {m-1}v+pm^{p-1}v^{mq+(m-1)(p-1)}w^{(p-2)/2}\biggr]\nabla v,
\end{gather*}
while $z$ is a solution of
$$
-\Delta z+2\eta |D^2v|^2
= -w\Delta \eta -2\langle \nabla w, \nabla \eta\rangle+\eta\langle b, \nabla w\rangle
+2\eta a w^2.$$ 
Using $\eta \nabla w=\nabla z-w\nabla \eta$, we obtain
\begin{align*}
-\Delta z -\langle b, \nabla z\rangle +2 \eta |D^2v|^2=-w[\Delta \eta+\langle b, \nabla \eta\rangle] -2\langle \nabla w, \nabla \eta\rangle
+2\eta a w^2.
 \end{align*}
As a consequence of the Cauchy-Schwarz inequality, we have
$$|\nabla w|= |\nabla (|\nabla v|^2)|=2|(D^2v)\nabla v|\le 2|D^2v||\nabla v|.$$
Thus,
$$|\langle \nabla w, \nabla \eta\rangle|\le 2|D^2v||\nabla v||\nabla \eta|\le 2\frac{|\nabla \eta|^2|\nabla v|^2}\eta  +\frac12 \eta |D^2v|^2=2
\frac{|\nabla \eta|^2}\eta w +\frac12 \eta |D^2v|^2.$$
Putting $\mathscr{L}z=-\Delta z -\langle b, \nabla z\rangle$, we get
\be\label{eqLz}
\mathscr{L}z + \eta|D^2v|^2\le \biggl[\mathcal L\eta + 4\frac{|\nabla \eta|^2}\eta\biggr] w +2\eta aw^2.
\ee
We now make the choice
\be\label{choicem}
mq+(m-1)(p-1)=-1 \Longleftrightarrow m(q+p-1)=p-2
\Longleftrightarrow m=\frac{p-2}{q+p-1}\in (0,1).
\ee
This yields
$$a=\Bigl[-m^{p-1}w^{(p-2)/2}+(1-m)\Bigr] v^{-2}$$
and
$$ b= \Bigl[pm^{p-1}w^{(p-2)/2}-2(1-m)\Bigr]v^{-1}\nabla v.$$
Hence, owing to $p>2$,
\be\label{aw2}
\begin{aligned}
2\eta aw^2
&=2\eta\Bigl[-m^{p-1}|\nabla v|^{p+2}+(1-m)|\nabla v|^4\Bigr] v^{-2} \\
&\le \eta\Bigl[-\frac{3}{2}m^{p-1}|\nabla v|^{p+2}+C\Bigr] v^{-2}.
\end{aligned}
\ee
Here and in the rest of the proof, $C$ denotes a generic positive constant
depending only on $m,n,p,q$. 
For any $\eps>0$,
using
$$|b|w
\le C\Bigl(|\nabla v|^{p-2}+1\Bigr)v^{-1}|\nabla v|^3
\le C\Bigl(|\nabla v|^{p+1}+1\Bigr)v^{-1},$$
we get
\begin{align*}
\left(\mathcal L\eta + 4\frac{|\nabla \eta|^2}\eta\right) w
&\le \left(|\Delta \eta| + 4\frac{|\nabla \eta|^2}\eta\right)w+C|\nabla  \eta|\left(|\nabla v|^{p+1}+1\right)v^{-1} \\
&\le C\eta^\alpha\Bigl[|\nabla v|^2+(|\nabla v|^{p+1}+1)v^{-1} \Bigr],\\
&\le C\eta^\alpha\bigl(|\nabla v|^{p+1}\wedge 1\bigr)v^{-1},
 \end{align*}
  where we used \eqref{cutoff_properties}.
Taking $\alpha= (p+1)/(p+2)\in (1/2,1)$,
  using Young's inequality and the fact that $v$ is bounded, we have
\begin{align*}
\left(\mathcal L\eta +4\frac{|\nabla \eta|^2}\eta\right)w
&\le C\eta^\alpha\bigl(|\nabla v|^{p+1}\wedge 1\bigr)v^{-2\alpha}\|v\|_\infty^{2\alpha-1}\\
&\le \varepsilon\eta\bigl(|\nabla v|^{p+2}\wedge 1\bigr)v^{-2}+C_\varepsilon\|v\|_\infty^{\frac{2\alpha-1}{1-\alpha}}\\
&\le \varepsilon\eta(|\nabla v|^{p+2}+1)v^{-2}+C_\varepsilon\|v\|_\infty^p,
 \end{align*}
where $C_\eps$ depends only on $m,n,p,q,\eps$.
Combining this with~\eqref{eqLz} and~\eqref{aw2},
we obtain
$$\mathscr{L}z + \eta|D^2v|^2 \le  \varepsilon\eta(|\nabla v|^{p+2}+1)v^{-2}+C_\varepsilon\|v\|_\infty^p
+ \eta\Bigl[-\frac{3}{2}m^{p-1}|\nabla v|^{p+2}+C\Bigr] v^{-2}.$$
Now choosing $\eps=\frac12 m^{p-1}$, we get
$$\mathscr{L}z  \le \Bigl(-m^{p-1}\eta |\nabla v|^{p+2}+C\|v\|_\infty^{p+2}+C\Bigr) v^{-2}.$$
  Since $z\ge 0$ has compact support, it attains its maximum at some point
 $\xi=\xi(x_0)$, and at this point we have
$$
0\le \mathscr{L}z
 \le \Bigl[-m^{p-1}z^{(p+2)/2}+C\|v\|_\infty^{p+2}+C\Bigr] v^{-2}.
$$
Hence
  $$|\nabla v(x_0)|^2\le \|z\|_\infty=z(\xi)\le C[1+\|v\|_\infty]^2.$$
  Since $C$ is independent of $x_0$, we deduce that $|\nabla v|$ is bounded in $\mathbb R^n$, with
  $$\|\nabla v\|_\infty\le C[1+\|v\|_\infty].$$
From
  $$|\nabla v|^{p-2}=|\nabla u^\frac{p+q-1}{p-2}|^{p-2}=cu^{q+1}|\nabla u|^{p-2},$$
the conclusion of   the lemma follows at once.
\end{proof}

\begin{rem}\label{rempq}
{\rm
 Lemma \ref{lembound} remains valid for all $p>2$ and $q>1-p$.
Note that in that case we have $m>0$ instead of $m\in (0,1)$ in \eqref{choicem},
but the proof works without changes.}
\end{rem}

\begin{proof}[Proof of Theorem~\ref{mainthm}]
Since $u$ is superharmonic, Lemma~\ref{lemharm} gives
$$u(x)\ge \ell:=\lim_{R\to\infty} \bar u(R)\in [0,\infty),\quad\hbox{ for all $x\in\R^n$.}$$
Now, for $s\ge 2$ to be fixed later, set $z:=(u-\ell)^s\ge 0$.
By direct computation we have
\begin{align*}
\Delta z
&=s(u-\ell)^{s-1}\Delta u+s(s-1)(u-\ell)^{s-2}|\nabla u|^2\\
&=-s(u-\ell)^{s-1}u^q|\nabla u|^p+s(s-1)(u-\ell)^{s-2}|\nabla u|^2 \\
&=s(u-\ell)^{s-2}|\nabla u|^2\bigl[s-1-(u-\ell)u^q|\nabla u|^{p-2}\bigr] \\
&\ge s(u-\ell)^{s-2}|\nabla u|^2\bigl[s-1-u^{q+1}|\nabla u|^{p-2}\bigr].
\end{align*}
Now choose $s\ge 1+\|u^{q+1}|\nabla u|^{p-2}\|_\infty$,
noting that the latter quantity is finite thanks to Lemma~\ref{lembound} 
 (this lemma is actually not required if $p=2$). 
 It then follows that $\Delta z\ge 0$.

Thus, $z$ is subharmonic and bounded. Applying Lemma~\ref{lemharm} to
the superharmonic function $\|z\|_\infty-z\ge 0$, it follows  that $\bar z$ is nondecreasing and that
$$z(x)\le k:=\lim_{R\to\infty} \bar z(R),\quad x\in\R^n.$$
But
$$\bar z(R)\le \|u\|_\infty^{s-1}(\bar u(R)-\ell)\to 0,\quad\hbox{ as $R\to \infty$.}$$
We conclude that $z\equiv 0$, i.e. $u\equiv \ell$.
\end{proof}

\begin{proof}[Proof of Theorem~\ref{mainthm2}]
We shall only prove assertion (i) since the proof of assertion (ii) is completely similar.
Let $(u,v)$ be a classical solution of~\eqref{main_pb} such that $0\le u,v\le M$.
Since $u$ is superharmonic, Lemma~\ref{lemharm} gives
$$u(x)\ge \ell:=\lim_{R\to\infty} \bar u(R)\in [0,\infty),\quad\hbox{ for all $x\in\R^n$.}$$
Now, for $s\ge K(M)+1$ with $K(M)$ from \eqref{hypmainthm2}, we set $z:=(u-\ell)^s\ge 0$.
Again, direct computation shows that
\begin{align*}
\Delta z
&=s(u-\ell)^{s-1}\Delta u+s(s-1)(u-\ell)^{s-2}|\nabla u|^2\\
&=-s(u-\ell)^{s-1}f(x,u,v,\nabla u,\nabla v)+s(s-1)(u-\ell)^{s-2}|\nabla u|^2 \\
&\ge s(u-\ell)^{s-2}\bigl[K(M)|\nabla u|^2-uf(x,u,v,\nabla u,\nabla v)\bigr] \ge 0,
\end{align*}
where we used assumption \eqref{hypmainthm2}.
Thus, $z$ is subharmonic and bounded. Applying Lemma~\ref{lemharm} to
the superharmonic function $\|z\|_\infty-z\ge 0$, it follows  that $\bar z$ is nondecreasing and that
$$z(x)\le k:=\lim_{R\to\infty} \bar z(R),\quad x\in\R^n.$$
But
$$\bar z(R)\le \|u\|_\infty^{s-1}(\bar u(R)-\ell)\to 0,\quad\hbox{ as $R\to \infty$.}$$
It follows that $z\equiv 0$, i.e. $u\equiv \ell$.
By exchanging the roles of $u,v$ and of $f,g$, we deduce that $v$ is also constant.
This completes the proof.
\end{proof}

{\bf Acknowledgements.} Part of this work was done during a visit of
PhS at the
Dipartimento di Matematica e Informatica of the Universit\`a degli Studi di Perugia
 within the auspices of the INdAM -- GNAMPA Projects
2018. He wishes to thank this institution for the kind hospitality.
PhS is partly supported by the Labex MME-DII (ANR11-LBX-0023-01).
\smallskip

RF and PP were partly supported by the Italian MIUR project
{\em Variational methods, with applications to problems in mathematical physics and
geometry} (2015KB9WPT\_009) and are members of the {\em Gruppo Nazionale per
l'Analisi Ma\-te\-ma\-ti\-ca, la Probabilit\`a e le loro Applicazioni}
(GNAMPA) of the {\em Istituto Nazionale di Alta Matematica} (INdAM).
The manuscript was realized within the auspices of the INdAM -- GNAMPA Projects
2018
{\em Pro\-ble\-mi non lineari alle derivate parziali} (Prot\_U-UFMBAZ-2018-000384).

\end{document}